\numberwithin{equation}{section}
\newtheorem{corollary}{Corollary}[section]
\newtheorem{lemma}{Lemma}[section]
\newtheorem{theorem}{Theorem}[section]
\theoremstyle{definition}
\DeclareMathOperator{\D}{\mathbb{D}}
\DeclareMathOperator{\C}{\mathbb{C}}
\begin{document}
\title{The Bergman number of a plane domain}

\author{Christina Karafyllia}  
\address{Institute for Mathematical Sciences, Stony Brook University, Stony Brook, NY 11794, U.S.A.}
\email{christina.karafyllia@stonybrook.edu}
\address{Department of Mathematics, University of Thessaly, Lamia, 35100, Greece.}
\email{ckarafyllia@uth.gr}   
\thanks{I would like to thank Gregory Markowsky for the valuable comments and the referee for the corrections and especially for providing the idea for the proof of  Corollary \ref{equideff}.}          

\subjclass[2010]{Primary 30H10, 30H20; Secondary 42B30, 30C85}

\keywords{Bergman number, weighted Bergman spaces, Hardy number, Hardy spaces}

\begin{abstract} Let $D$ be a domain in the complex plane $\C$. The Hardy number of $D$, which first introduced by Hansen, is the maximal number $h(D)$ in $[0,+\infty]$ such that $f$ belongs to the classical Hardy space $H^p (\D)$ whenever $0<p<h(D)$ and $f$ is holomorphic on the unit disk $\D$ with values in $D$. As an analogue notion to the Hardy number of a domain $D$ in $\C$, we introduce the Bergman number of $D$ and we denote it by $b(D)$. Our main result is that, if $D$ is regular, then $h(D)=b(D)$. This generalizes earlier work by the author and Karamanlis for simply connected domains. The Bergman number $b(D)$ is the maximal number in $[0,+\infty]$ such that $f$ belongs to the weighted Bergman space $A^p_{\alpha} (\D)$ whenever $p>0$ and $\alpha>-1$ satisfy $0<\frac{p}{\alpha+2}<b(D)$ and $f$ is holomorphic on $\D$ with values in $D$. We also establish several results about Hardy spaces and weighted Bergman spaces and we give a new characterization of the Hardy number and thus of the Bergman number of a regular domain with respect to the harmonic measure. 
\end{abstract}

\maketitle

\section{Introduction}\label{int}

The Hardy space with exponent $p>0$ is denoted by $H^p (\D)$ and is defined to be  the set of all holomorphic functions $f$ on the unit disk $\D$ such that
\[ \sup_{0<r<1}\int_{0}^{2\pi} {{{| {f( re^{i\theta} )}|}^p} d\theta}  <  +\infty.\]
The fact that a function belongs to $H^p (\D)$ imposes a restriction on its growth and this restriction is stronger when $p$ increases, that is, if $0<q<p$ then $H^p (\D) \subset H^q (\D)$. For the theory of Hardy spaces see \cite{Dur}.

In \cite{Han} Hansen studied the problem of determining the numbers $p>0$ for which a holomorphic function $f$ on $\mathbb{D}$ belongs to ${H^p}( \mathbb{D})$ by studying $f( \mathbb{D} )$. For this purpose, he introduced a number which he called the Hardy number of a domain. The Hardy number of a domain $D$ in the complex plane $\C$ is defined as 
\[h(D)=\sup \left\{ p>0:|z|^p \,\, \text{has a harmonic majorant on}\,\,D \right\}.\]
Later, in \cite{Kim} Kim and Sugawa proved some equivalent definitions for $h(D)$. Let $f$ be a holomorphic function on $\D$ and set
\[h(f)=\sup \{p>0:f\in H^p(\D)\}\in [0,+\infty].\]
Since $H^p(\D) \subset H^q(\D)$ for $0<q<p$, it follows that $f\in H^p(\D)$ for $p<h(f)$ and $f\notin H^p(\D)$ for $p>h(f)$. In case $p=h(f)$, then both can happen (see \cite{Karark}). Let $D$ be a domain in $\C$. The Hardy number of $D$ can equivalently be characterized (see \cite[Lemma 2.1]{Kim}) as 
\[h(D)=\inf \{h(f):f\in H(\D,D)\}\in [0,+\infty]\]
or
\[h(D)=\sup \{p>0 : H(\D,D)\subset H^p (\D)\},\]
where $H(\D,D)$ denotes the set of all holomorphic functions on $\D$ with values in $D$. Combining the definitions above with the monotonicity of Hardy spaces, we infer that for a given plane domain $D$, the Hardy number of $D$, $h(D)$, is the maximal number in $[0,+\infty]$ such that $f \in H^p (\D)$ whenever $0<p<h(D)$ and $f$ is holomorphic on $\D$ with values in $D$. Moreover, Kim and Sugawa \cite[Lemma 2.1]{Kim} proved that $h(D)=h(f)$ for a holomorphic universal covering map of $\D$ onto $D$.

The Hardy number has been studied extensively over the years. A classical problem is to find estimates or exact descriptions for it.
What we know, so far, about the Hardy number of an arbitrary plane domain is some estimates proved by Hansen in \cite{Han} and an exact formula for it involving harmonic measure proved by Kim and Sugawa in \cite{Kim}. Their proof is based on  Ess{\'e}n's main lemma in \cite{Ess}. However, we know more on how to estimate it for certain types of domains such as starlike \cite{Han} and spiral-like domains \cite{Han2}, comb domains \cite{Karfin} and, more generally, simply connected domains \cite{Karark}, \cite{Kar}. In this paper, we prove one more way to compute the Hardy number of a regular domain with the aid of harmonic measure (see Theorem \ref{hardnumb}).

A more general class of holomorphic functions than Hardy spaces is weighted Bergman spaces. The weighted Bergman space with exponent $p>0$ and weight $\alpha>-1$ is denoted by $A_\alpha ^p (\D)$ and is defined to be the set of all holomorphic functions $f$ on $\mathbb{D}$ such that
\[\int_\mathbb{D} {{{\left| {f\left( z \right)} \right|}^p}{{\left( {1 - {{\left| z \right|}^2}} \right)}^\alpha }dA\left( z \right)}  < +\infty,\] 
where $dA$ denotes the Lebesgue area measure on $\mathbb{D}$. The unweighted Bergman space ($\alpha=0$) is simply denoted by $A^p (\D)$ and it is known as the Bergman space with exponent $p$. Weighted Bergman spaces contain Hardy spaces, that is, $H^p (\D)\subset A_{\alpha}^p (\D)$, for all $\alpha>-1$ and $p>0$ (see \cite{Zhu}). Actually, a more general result holds. By Corollary 4.4 in \cite{Smith} it follows that $H^q (\D)\subset A_{\alpha}^p (\D)$ provided that $\frac{p}{\alpha+2} \le q\le p$. For the theory of Bergman spaces see \cite{DurS}.

As an analogue notion to the Hardy number of a plane domain, here we introduce the Bergman number of a domain in $\C$ in the following way. If $f$ is a holomorphic function on $\D$, we set
\[b(f)=\sup \left\{\frac{p}{\alpha+2}:p>0, \alpha>-1,f\in A^p_\alpha(\D)\right\}\in [0,+\infty].\]
Note that the number $b(f)$ was first introduced by the author and Karamanlis in \cite{Kar} for conformal maps $f$ on $\D$ and it was proved that $b(f)=h(f)$. Let $D$ be a domain in $\C$. We define the Bergman number of $D$ as
\[b(D)=\inf \{b(f):f\in H(\D,D)\}\in [0,+\infty].\]
Our first observation is that the Bergman number satisfies the same basic properties as the Hardy number (see \cite[p.\ 236]{Han} or \cite[Lemma 2.3]{Kim}). That is,
\begin{enumerate}
	\item if $D$ is bounded then $b(D)=+\infty$, 
	\item if $\C \backslash D$ is bounded then $b(D)=0$,
	\item if $D\subset D'$ then $b(D)\ge b(D')$,
	\item $b(D)=b(\phi (D))$ for a complex affine map $\phi (z)=az+b$ with $a\neq 0$,
	\item if $D$ is simply connected then $b(D)\ge 1/2$.
\end{enumerate}
Properties (1), (2) are proved in Section \ref{se1}, properties (3), (4) are trivial and (5) is proved in \cite{Kar}. Moreover, we observe that since $H^p (\D)\subset A_{\alpha}^p (\D)$, it is easy to show that $h(D)\le b(D)$ (see Section \ref{se1}). The question which arises is whether the reverse inequality holds and thus  $h(D)= b(D)$,  which is not a direct consequence of their definitions. In this paper we prove that if $D$ is regular then the answer is positive. We define a subdomain $D$ of $\C_\infty=\C \cup \{\infty\}$ to be regular if $D$ posseses a ``barrier function" at each of its boundary points \cite[Chapter 4]{Ran}. If $D$ is an unbounded subdomain of $\C$, then $\infty$ is to be included among the boundary points of $D$. 

\begin{theorem}\label{main}
Let $D$ be a regular domain and $f$ be a universal covering map of $\D$ onto $D$. Then $h(D)=b(D)=h(f)=b(f)$.
\end{theorem}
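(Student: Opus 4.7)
The plan is to organize the four quantities into a chain of inequalities that is forced to collapse to equality. Three of them come without using regularity: the inclusion $H^p(\D)\subset A^p_\alpha(\D)$ (a special case of Smith's \cite{Smith} Corollary 4.4) yields $h(g)\le b(g)$ for every $g\in H(\D,D)$, so taking infima gives $h(D)\le b(D)$; the inequality $b(D)\le b(f)$ is immediate from the definition of $b(D)$ as $\inf_{g\in H(\D,D)} b(g)$, since $f\in H(\D,D)$; and $h(D)=h(f)$ is the Kim--Sugawa identity \cite[Lemma 2.1]{Kim}. Combined with the trivial $h(f)\le b(f)$, the whole theorem reduces to the single inequality $b(f)\le h(f)$ for the universal covering map of the regular domain $D$.

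To prove $b(f)\le h(f)$, I would work through harmonic measure. By Theorem \ref{hardnumb} (the harmonic-measure formula for $h(D)$ established earlier in the paper), $h(D)$ is the critical exponent governing the decay of $\omega_D(z_0,\partial D\cap\{|\zeta|\ge R\})$ as $R\to\infty$. On the Bergman side, the layer-cake representation
\begin{equation*}
\int_\D |f(z)|^p(1-|z|^2)^\alpha\, dA(z) \;=\; p\int_0^\infty s^{p-1}\nu_\alpha(\{z\in\D:|f(z)|>s\})\, ds,\qquad \nu_\alpha(E):=\int_E (1-|z|^2)^\alpha\, dA,
\end{equation*}
reduces the problem to estimating $\nu_\alpha(\{|f|>s\})$. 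Because $|f|^p$ is subharmonic, every component of the super-level set $\{|f|>s\}$ meets $\partial\D$, and its boundary trace identifies, via the standard push-forward of normalized Lebesgue measure under the boundary limits of a covering map, with the harmonic-measure tail $\omega_D(z_0,\{|\zeta|>s\})$ from Theorem \ref{hardnumb}. The goal is an estimate roughly of the shape
\begin{equation*}
\nu_\alpha\bigl(\{|f|>s\}\bigr) \;\le\; C\,\omega_D\bigl(z_0,\{\zeta\in\partial D:|\zeta|>s\}\bigr)^{\alpha+1},
\end{equation*}
where the exponent $\alpha+1$ reflects the weighting $(1-|z|^2)^\alpha$ together with the one-dimensional nature of $\partial\D$. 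Substituting this into the layer-cake integral and comparing exponents with the decay rate of $\omega_D$ from Theorem \ref{hardnumb} yields convergence whenever $p/(\alpha+2)<h(D)$, which is exactly $b(f)\le h(D)=h(f)$.

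The main obstacle is the bound relating the Bergman mass of the level sets of $|f|$ to a power of the harmonic measure on $\partial D$. For simply connected $D$ this was handled in \cite{Kar} by a direct conformal change of variables together with Koebe distortion. Since $f$ is now only a covering map and non-injective, a substitute argument is required that uses only the hyperbolic geometry of $\D$ (which is compatible with the covering, via $\lambda_\D(z)=\lambda_D(f(z))|f'(z)|$) and the boundary behaviour of the Perron--Wiener--Brelot solution. The regularity hypothesis on $D$ should enter here twice: first to legitimise the harmonic-measure characterisation of $h(D)$ in Theorem \ref{hardnumb}, and second to control the geometry of the (possibly infinitely many) components of $\{|f|>s\}$ accumulating on $\partial\D$. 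I expect Ess\'en's main lemma \cite{Ess}, already used in \cite{Kim} on the Hardy side, to provide the bookkeeping for summing the contributions of all components in a Carleson-box decomposition of $\D$.
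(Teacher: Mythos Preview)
Your reduction in the first paragraph is correct and matches the paper: the only nontrivial step is $b(f)\le h(f)$ for the universal covering map. But the argument you sketch for that step goes in the wrong logical direction. Proving that the layer-cake integral \emph{converges} whenever $p/(\alpha+2)<h(D)$ is the statement ``$f\in A^p_\alpha(\D)$ for all such $(p,\alpha)$'', i.e.\ $b(f)\ge h(D)$. That is precisely the easy inequality you already obtained from $H^p\subset A^p_\alpha$; it does not give $b(f)\le h(D)$. (Even on its own terms the sketch is off: with the exponent $\alpha+1$ you propose, comparison with $\omega\sim r^{-h(D)}$ gives convergence only for $p/(\alpha+1)<h(D)$, a strictly smaller range.) A minor circularity: Theorem~\ref{hardnumb} as stated includes $b(D)$ and is proved \emph{after} Theorem~\ref{main}; what is available beforehand is only the $h(D)$ part, Lemma~\ref{hanu1}.

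What the paper actually does for $b(f)\le h(f)$ is the reverse implication: starting from $f\in A^p_\alpha(\D)$, it extracts a harmonic-measure decay estimate. Smith's characterisation replaces the Bergman norm by $\int_\D |f|^{p-2}|f'|^2(\log\tfrac{1}{|z|})^{\alpha+2}\,dA$; the non-univalent change of variable together with Lindel\"of's principle (this is where the covering structure is used, via $\sum_j\log\tfrac{1}{|z_j(w)|}=g_D(f(0),w)$) turns this into $\int_D |w|^{p-2}g_D(f(0),w)^{\alpha+2}\,dA(w)$; Jensen's inequality and Baernstein's lemma then yield $\int r^{p-1}\omega_D(f(0),E_r)^{\alpha+2}\,dr<\infty$, and monotonicity of $r\mapsto\omega_D(f(0),E_r)$ gives the pointwise bound $\omega_D(f(0),E_r)\le Cr^{-p/(\alpha+2)}$ (Corollary~\ref{ber}). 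Plugging this into Lemma~\ref{hanu1} gives $h(D)\ge p/(\alpha+2)$ for every admissible $(p,\alpha)$, hence $h(D)\ge b(f)$. No level-set estimate of the type you propose, no Carleson boxes, and no invocation of Ess\'en's lemma are needed.
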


An immediate corollary (see Section \ref{lastse}) is that if $\frac{p}{\alpha+2}<b(D)$ then $f\in A^p_{\alpha} (\D)$ for every holomorphic function $f$ on $\D$ with values in $D$. In other words, $b(D)$ is the maximal number in $[0,+\infty]$ such that $f\in A^p_{\alpha} (\D)$ whenever $0<\frac{p}{\alpha+2}<b(D)$ and $f$ is holomorphic on $\D$ with values in $D$. Another consequence of Theorem \ref{main} (see Section \ref{lastse}) is that, if $D$ is regular, then
\[b(D)=\sup \left\{\frac{p}{\alpha+2}: p>0, \alpha>-1, H(\D,D)\subset A^p_{\alpha} (\D)\right\}.\]

We also establish the following result that gives a new description of the Hardy number and thus of the Bergman number of a regular domain involving harmonic measure. For the definition of harmonic measure see \cite{Gar}, \cite{Ran}.

\begin{theorem}\label{hardnumb} Let $D$ be a regular domain and let $a\in D$. If $E_r=\partial D \cap \{z \in \C: |z| > r\}$ for $r>0$ (see Fig.\ref{figg}), then
	\[h(D)=b(D)=\liminf_{r\to +\infty}\frac{\log\omega_{D}(a,E_r)^{-1}}{\log r}.\]
\end{theorem}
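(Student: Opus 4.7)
By Theorem \ref{main} it suffices to compute $h(f)$ for the universal covering map $f\colon\D\to D$, since $h(D)=b(D)=h(f)$. Fix the basepoint $a=f(0)$. Note that the right-hand side in the statement does not depend on the choice of $a\in D$: Harnack's inequality implies that $\omega_D(a,E_r)/\omega_D(a',E_r)$ is bounded between positive constants depending only on $a$ and $a'$, whose logarithm is negligible against $\log r$ as $r\to\infty$. Hence it suffices to prove
$$h(f)=\liminf_{r\to+\infty}\frac{\log\omega_D(a,E_r)^{-1}}{\log r}.$$

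The heart of the argument is a distribution identity expressing the integral means of $f$ in terms of $\phi(r):=\omega_D(a,E_r)$. Because $D$ is regular, $f$ admits radial boundary values $f^*(e^{i\theta})\in\partial D\subset\C_\infty$ for almost every $\theta$, and the push-forward of normalized Lebesgue measure on $\partial\D$ under $f^*$ equals $\omega_D(a,\cdot)$: if $U\colon\partial D\to\mathbb{R}$ is bounded continuous and $\widetilde U$ solves the Dirichlet problem with data $U$, then $\widetilde U\circ f$ is bounded harmonic on $\D$, so the mean-value property at $0$ yields $\int_{\partial D}U\,d\omega_D(a,\cdot)=\widetilde U(a)=(2\pi)^{-1}\int_0^{2\pi}U(f^*(e^{i\theta}))\,d\theta$, and approximation by indicators extends this to the full distribution identity. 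Combining subharmonicity of $|f|^p$ with the standard $H^p$-theory fact that $f^*\in L^p$ implies $f\in H^p$, and applying the layer-cake formula, we obtain
$$\sup_{0<r<1}\frac{1}{2\pi}\int_0^{2\pi}|f(re^{i\theta})|^p\,d\theta=\int_0^\infty p\, t^{p-1}\phi(t)\,dt\in[0,+\infty],$$
so $f\in H^p(\D)$ if and only if the right-hand integral is finite. Consequently, $h(f)=\sup\{p>0:\int_1^\infty t^{p-1}\phi(t)\,dt<\infty\}$.

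It remains to identify this supremum with $\gamma:=\liminf_{r\to+\infty}\log\phi(r)^{-1}/\log r$; here $\phi$ is nonincreasing and tends to $0$. If $p<\gamma$, pick $q\in(p,\gamma)$; then $\phi(t)\le t^{-q}$ for $t$ sufficiently large, so $\int_1^\infty t^{p-1}\phi(t)\,dt$ converges. Conversely, if $p>\gamma$, pick $q\in(\gamma,p)$ and a sequence $t_n\to+\infty$ with $\phi(t_n)>t_n^{-q}$; monotonicity of $\phi$ gives $\phi(t)\ge t_n^{-q}$ on $[t_n/2,t_n]$, whence $\int_{t_n/2}^{t_n}t^{p-1}\phi(t)\,dt\ge C\, t_n^{p-q}\to+\infty$ for some constant $C=C(p)$, and the integral diverges. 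This forces $h(f)=\gamma$, completing the proof. The main obstacle is the distribution identity in the second paragraph: for simply connected $D$ it is Fatou's theorem via the Riemann map, but the multiply connected regular case relies on solvability of the Dirichlet problem on $D$ to transport Lebesgue measure to harmonic measure.
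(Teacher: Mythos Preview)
Your approach via boundary values and the layer-cake formula is appealing, but there is a genuine gap in the implication ``$f^*\in L^p$ implies $f\in H^p$'', which you invoke as a ``standard $H^p$-theory fact''. This is false in general: for $f(z)=e^{(1+z)/(1-z)}$ one has $|f^*(e^{i\theta})|=1$ a.e.\ yet $f\notin H^p$ for any $p>0$; equivalently, a nonnegative subharmonic function with integrable boundary data need not admit a harmonic majorant (on $\D$, the square of the Poisson kernel has boundary value $0$ a.e.\ but unbounded circular means). Consequently your displayed identity $\sup_r\frac{1}{2\pi}\int|f(re^{i\theta})|^p\,d\theta=\int_0^\infty p\,t^{p-1}\phi(t)\,dt$ is only justified in the direction $\ge$ (Fatou's lemma), which yields $h(f)\le\gamma$; the inequality $h(f)\ge\gamma$ is precisely where the missing implication is needed. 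It may well hold for universal covers of \emph{regular} domains that $\int_{\partial D}|\zeta|^p\,d\omega_D(a,\zeta)<\infty$ forces $|w|^p$ to have a harmonic majorant on $D$, but that is the substantive potential-theoretic content here and you have not supplied an argument. A secondary issue is that the a.e.\ existence of $f^*$ with values in $\partial D$ is asserted without proof; when $\C\setminus D$ has empty interior one cannot simply reduce to Fatou's theorem by a M\"obius transformation.

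The paper sidesteps boundary values entirely. It establishes the two-sided criterion $f\in H^p\Leftrightarrow\int t^{p-1}\omega_D(a,E_t)\,dt<\infty$ (its Theorem~\ref{hardyequi}) by combining Yamashita's area characterization of $H^p$, the non-univalent change of variable via Lindel\"of's principle to rewrite the area integral as $\int_0^\infty r^{p-1}\bigl(\int_0^{2\pi}g_D(a,re^{i\theta})\,d\theta\bigr)\,dr$, and Baernstein's lemma expressing the circular mean of the Green function as $2\pi\int_r^\infty\omega_D(a,E_t)\,t^{-1}\,dt$. This Green-function route delivers both directions simultaneously and requires neither the existence of $f^*$ nor the problematic converse you invoke; the $\liminf$ formula then follows exactly as in your final paragraph.
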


We note that Theorem \ref{hardnumb} is not true if $D$ is not regular. For example, if $D=\{z\in \C:|z|>1\}$, then $h(D)=b(D)=0$ because $\C \backslash D$ is bounded. However, $\omega_{D}(a,E_r)=0$ for every $r>1$.

Next, we prove the results above as follows. In Section \ref{se1} we show that if $D$ is a plane domain, then $h(D)\le b(D)$. For the proof of the reverse inequality, which is more complicated and occupies the rest of the paper, in Section \ref{se2} we establish some results about covering maps in Hardy and weighted Bergman spaces, which are of independent interest. Then, in Section \ref{lastse} we prove Theorem \ref{main},  Theorem \ref{hardnumb} and some subsequent results.

	\begin{figure} 
	\begin{center}
		\includegraphics[scale=0.5]{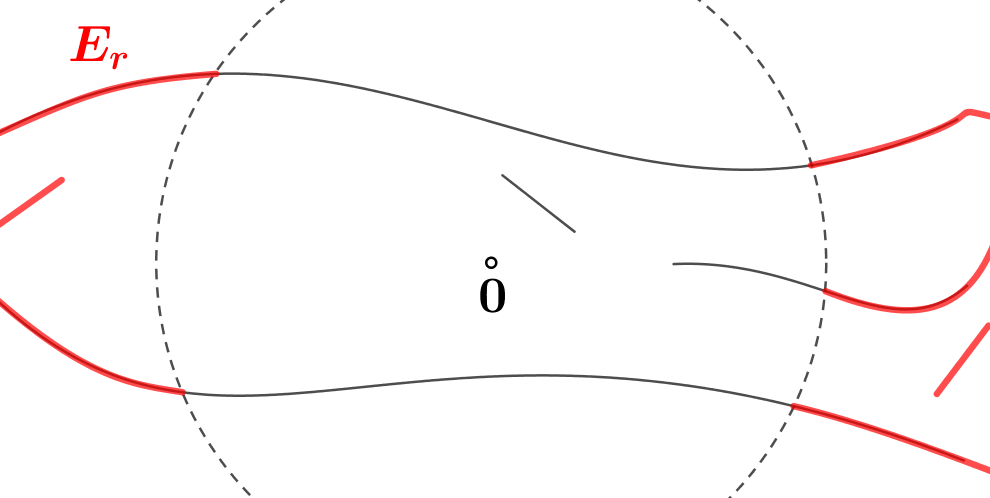}
		\caption{The set $E_r$.}
		\label{figg}
	\end{center}
\end{figure}

\section{Proof of $h(D)\le b(D)$}\label{se1}

\begin{lemma}\label{lemma11}
 Let $f$ be a holomorphic function on $\D$. Then $h(f)\le b(f)$.
\end{lemma}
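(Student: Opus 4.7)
The plan is to exploit the inclusion $H^p(\D)\subset A^p_\alpha(\D)$ valid for every $p>0$ and every $\alpha>-1$, which the author recalled from Zhu just above the definition of $b(f)$. This immediately tells us that membership in a Hardy space contributes points to the supremum defining $b(f)$, and the only question is to track the contributed ratios carefully and let the weight degenerate.

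In detail, I would argue as follows. If $h(f)=0$ there is nothing to prove, so assume $h(f)>0$ and pick any $p$ with $0<p<h(f)$. By definition of $h(f)$ we then have $f\in H^p(\D)$, and by the inclusion recalled above, $f\in A^p_\alpha(\D)$ for every $\alpha>-1$. In particular, for every $\alpha>-1$ the ratio $\tfrac{p}{\alpha+2}$ belongs to the set whose supremum is $b(f)$, so
\[
b(f)\;\ge\;\frac{p}{\alpha+2}\qquad\text{for every }\alpha>-1.
\]
Letting $\alpha\downarrow -1$ (so $\alpha+2\downarrow 1$) yields $b(f)\ge p$. Since this holds for every $p<h(f)$, taking the supremum over such $p$ gives $b(f)\ge h(f)$, covering the case $h(f)=+\infty$ as well since then $b(f)\ge p$ for arbitrarily large $p$.

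There is no genuine obstacle here; the only point to be careful about is that one cannot take $\alpha=-1$ directly (which would reduce the weighted area measure to something ill-defined), so the argument must be phrased as a limit $\alpha\downarrow -1$ inside the supremum. One could alternatively invoke the finer inclusion $H^q(\D)\subset A^p_\alpha(\D)$ for $\tfrac{p}{\alpha+2}\le q\le p$ from Smith's Corollary 4.4 that the author quoted, but this is not needed for the lemma, since the coarse inclusion $H^p\subset A^p_\alpha$ is already enough to recover every admissible exponent ratio arbitrarily close to $p$ from below.
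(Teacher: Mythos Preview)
Your proof is correct and follows essentially the same idea as the paper's: show that $b(f)\ge p$ for every $p<h(f)$ by invoking a Hardy-to-Bergman inclusion. The only cosmetic difference is that the paper fixes $\alpha=0$ and uses the sharper inclusion $H^p(\D)\subset A^{2p}_0(\D)$ from Smith to obtain the ratio $2p/2=p$ directly, whereas you use the coarser inclusion $H^p(\D)\subset A^p_\alpha(\D)$ from Zhu and recover the same bound by letting $\alpha\downarrow-1$; both routes are equally valid and the paper also separates the cases $h(f)=+\infty$ and $0<h(f)<+\infty$, which your argument handles uniformly.
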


\begin{proof} Let $f$ be a holomorphic function on $\D$. If $h(f)=0$ then, obviously, $h(f)\le b(f)$. If $h(f)=+\infty$ then $f \in H^p(\D)$ for every $p>0$. Since $H^p(\D)\subset A^p_\alpha(\D)$, it follows that $f \in A^p_\alpha(\D)$ for every $p>0$ and for every $\alpha>-1$. Therefore, we have 
	\[b(f)\ge \frac{p}{\alpha+2}\]
	for every $p>0$ and for every $\alpha>-1$. Setting $\alpha=0$ and then letting $p\to +\infty$, we deduce that $b(f)=+\infty$. Now, suppose that $h(f)=p_0>0$. Then $f\in H^p(\D)$ for every $0<p<p_0$. This implies that $f \in A^{2p} (\D)$ for every $0<p<p_0$ (see \cite{Smith}) and thus 
\[b(f) \ge \frac{2p}{0+2} = p.\]  
This proves $b(f) \ge p_0$ or $b(f) \ge h(f)$. Therefore, in any case we have that $h(f)\le b(f)$. This holds for any holomorphic function on $\D$ and the proof is complete.
	\end{proof}

\begin{lemma}\label{lemma12}
Let $D$ be a domain in $\C$. Then $h(D)\le b(D)$. Moreover, if $D$ is bounded, then $h(D)=b(D)=+\infty$ and if $\C \backslash D$ is bounded, then $h(D)=b(D)=0$.
\end{lemma}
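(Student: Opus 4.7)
The plan is to deal with the three assertions in sequence, with the bulk of the work already carried by Lemma \ref{lemma11}. The inequality $h(D)\le b(D)$ follows immediately: for every $f\in H(\D,D)$ Lemma \ref{lemma11} gives $h(f)\le b(f)$, so
\[ h(D)=\inf_{f\in H(\D,D)}h(f)\le\inf_{f\in H(\D,D)}b(f)=b(D). \]

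For the bounded case, the idea is that any $f\in H(\D,D)$ is uniformly bounded, which trivially places it in every Hardy and weighted Bergman space. Concretely, pick $M>0$ with $D\subset\{|z|\le M\}$; then $\|f\|_\infty\le M$ for every $f\in H(\D,D)$, so $f\in H^p(\D)\cap A^p_\alpha(\D)$ for all $p>0$ and $\alpha>-1$ (since $\int_\D(1-|z|^2)^\alpha\,dA<\infty$ for $\alpha>-1$). Fixing $\alpha$ and letting $p\to+\infty$ in the definition of $b(f)$ shows $h(f)=b(f)=+\infty$, hence $h(D)=b(D)=+\infty$.

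For the case $\C\setminus D$ bounded, I would exhibit a family of test maps in $H(\D,D)$ whose Hardy and Bergman numbers tend to zero. Choose $R>0$ with $\{|z|>R\}\subset D$ and, for each positive integer $n$, set
\[ f_n(z)=\frac{(R+1)2^n}{(1-z)^n},\qquad z\in\D. \]
Since $|1-z|<2$ on $\D$, one has $|f_n(z)|>R$, so $f_n\in H(\D,\{|z|>R\})\subset H(\D,D)$. The classical integrability facts $(1-z)^{-n}\in H^p(\D)\iff np<1$ and $\int_\D(1-|z|^2)^\alpha|1-z|^{-\beta}\,dA(z)<\infty\iff \beta<\alpha+2$ yield $h(f_n)=b(f_n)=1/n$. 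Consequently $h(D)\le 1/n$ and $b(D)\le 1/n$ for every $n$, i.e.\ $h(D)=b(D)=0$.

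None of the three steps is genuinely delicate; the one point requiring a little care is to choose the constants so that $f_n$ actually takes values in $\{|z|>R\}\subset D$ (not merely that $f_n$ blows up on part of $\partial\D$), and then to recall the standard Hardy and weighted Bergman integrability criteria for negative powers of $1-z$. The inequality $h(D)\le b(D)$ itself is a one-line consequence of Lemma \ref{lemma11} once one observes that taking infima preserves the pointwise inequality $h(f)\le b(f)$.
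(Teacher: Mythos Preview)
Your proof is correct. The first two parts match the paper's argument essentially verbatim (the paper just cites references for $h(D)=+\infty$ in the bounded case, while you spell it out).

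For the case where $\C\setminus D$ is bounded, your approach differs from the paper's. You exhibit a \emph{sequence} of elementary test maps $f_n(z)=(R+1)2^n(1-z)^{-n}$ with $h(f_n)=b(f_n)=1/n$, and conclude by letting $n\to\infty$; the integrability criteria you invoke for powers of $1-z$ are standard and your verification that $f_n(\D)\subset\{|z|>R\}$ is clean. The paper instead uses a \emph{single} function $f(z)=R\,e^{(1+z)/(1-z)}$, which maps $\D$ into $\{|z|>R\}\subset D$, and shows by a direct lower estimate on the area integral that $f\notin A^p_\alpha(\D)$ for any $p>0$, $\alpha>-1$, so that $b(f)=0$ in one shot. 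The paper's route is slightly more efficient (one function suffices), but the estimate involves bounding $|1-re^{i\theta}|^2$ from above on a shrinking arc; your route trades that computation for the better-known $H^p$ and $A^p_\alpha$ membership of $(1-z)^{-n}$, at the cost of needing a whole family to drive the infimum to zero. Both arguments are short and neither offers a decisive advantage.
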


\begin{proof}
Since, by Lemma \ref{lemma11}, $h(f)\le b(f)$ for every $f\in H(\D,D)$, by the definitions of $h(D)$ and $b(D)$ we directly have $h(D)\le b(D)$. 

If $D$ is bounded, then $h(D)=+\infty$ (see \cite[p.\ 236]{Han} or \cite[p.\ 293]{Kim}). Since $h(D)\le b(D)$, it follows that $b(D)=+\infty$.

If $\C \backslash D$ is bounded, then $h(D)=0$ (see \cite[p.\ 236]{Han} or \cite[Lemma 2.3]{Kim}). The proof that $b(D)=0$ is similar. Since $\C \backslash D$ is bounded, $\C \backslash D \subset \{z\in \C: |z|\le R \}$ for some $R>0$. So, the function 
\[f(z)=Re^{\frac{1+z}{1-z}}\]
maps $\D$ into $D$. For $\varepsilon>0$ sufficiently close to $0$ we have the following estimates
\begin{align}
\int_\mathbb{D} {{{\left| {f\left( z \right)} \right|}^p}{{\left( {1 - {{\left| z \right|}^2}} \right)}^\alpha }dA\left( z \right)}&=R^p\int_0^1 \int_0^{2\pi} e^{\frac{(1-r^2)p}{1+r^2-2r\cos \theta}}(1-r^2)^{\alpha}rd\theta dr \nonumber\\
&\ge R^p\int_0^1 \int_0^{1-r} e^{\frac{(1-r^2)p}{1+r^2-2r\cos \theta}}(1-r^2)^{\alpha}rd\theta dr \nonumber\\
&\ge R^p\int_0^1 (1-r)e^{\frac{(1-r^2)p}{1+r^2-2r\cos (1-r)}}(1-r^2)^{\alpha}rdr \nonumber\\
&\ge R^p\int_0^1 e^{\frac{(1-r)p}{1+r^2-2r\cos (1-r)}}(1-r)^{\alpha+1}rdr \nonumber\\
&\ge R^p\int_{1-\varepsilon}^1 e^{\frac{C}{ 1-r}}(1-r)^{\alpha+1}rdr \nonumber \\  
&\ge (1-\varepsilon) R^pC'\int_{1-\varepsilon}^1 \frac{1}{(1-r)^2}dr=+\infty, \nonumber 
\end{align}
where $C,C'$ are positive constants. Therefore, $f$ does not belong to $A^p_{\alpha} (\D)$ for any $p>0$ and $\alpha>-1$. This implies that $b(f)=0$ and thus $b(D)=0$.
\end{proof}

\section{Auxiliary results for Hardy and Bergman spaces}\label{se2}

From now on, let $D$ be a regular domain. In this section we prove some results for Hardy and weighted Bergman spaces that we use later to show $h(D)\ge b(D)$. By Lemma \ref{lemma12} it suffices to assume that $D$ is unbounded. Also, we suppose $f$ is a universal covering map of $\D$ onto $D$ and we set $E_r=\partial D \cap \{z \in \C: |z| > r\}$ for $r>0$. 

\subsection{Results for Hardy spaces}\label{subse} Next, we establish a new characterization of the Hardy number of $D$. For this purpose we derive several results for covering maps in Hardy spaces. Note that the proofs below follow methods applied in \cite{Kar}.

\begin{theorem}\label{hardyequi} Let $D$ and $f$ be as above. Then $f\in H^p (\D)$ if and only if 
\[\int_{|f(0)|}^{+\infty } {t^{p-1} \omega_D (f(0),E_t)}dt<+\infty.\]	
\end{theorem}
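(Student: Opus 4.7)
The plan is to establish the theorem as a chain of three equivalences:
\[ f \in H^p(\D) \iff |z|^p \text{ has a harmonic majorant on } D \iff \int_{\partial D}|w|^p\,d\omega_D(a,w)<+\infty \iff \int_{|a|}^{+\infty} t^{p-1}\omega_D(a,E_t)\,dt < +\infty, \]
where $a:=f(0)$. The first link is the classical Hansen criterion for universal covering maps: if $u$ is the least harmonic majorant of $|f|^p$ on $\D$, then by uniqueness $u$ is invariant under the deck transformation group of $f$ (because $|f|^p$ is), hence descends through $f$ to a harmonic function $U$ on $D$ satisfying $U\circ f = u \ge |f|^p$, so that $U(w)\ge |w|^p$ for every $w\in D$; the converse direction is immediate by composing a harmonic majorant of $|z|^p$ on $D$ with $f$.

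For the second link I would use the regularity of $D$ through an exhaustion argument. Let $D_R:=D\cap\{|z|<R\}$ and $h_R:=H_{D_R}[|z|^p]$; subharmonicity of $|z|^p$ and the maximum principle give $h_R\ge |z|^p$ on $D_R$, while comparing boundary values on $\partial D_R$ as $R$ grows shows that $R\mapsto h_R$ is increasing. Assuming $\int|w|^p\,d\omega_D(a,w)<+\infty$, a truncation $|w|^p\wedge N$ combined with the standard convergence of $\omega_{D_R}(a,\cdot)$ to $\omega_D(a,\cdot)$ against bounded continuous test functions (facilitated by regularity of $D$) identifies $\lim_R h_R(a)$ with $\int|w|^p\,d\omega_D(a,w)$, producing a finite harmonic majorant of $|z|^p$. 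Conversely, any harmonic majorant $h$ of $|z|^p$ satisfies $h(a)\ge H_{D_R}[|w|^p\wedge N](a)$ for every $R$ and $N$, and letting $R\to\infty$ then $N\to\infty$ yields $h(a)\ge\int|w|^p\,d\omega_D(a,w)$.

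The final link is a routine layer-cake computation: since $D\subset\C$ is planar, singletons in $\partial_\infty D$ (in particular $\{\infty\}$) carry zero harmonic measure, so
\[ \int_{\partial D}|w|^p\,d\omega_D(a,w)=p\int_0^{+\infty} t^{p-1}\omega_D(a,E_t)\,dt, \]
and the range $0\le t\le |a|$ contributes at most $|a|^p$ since $\omega_D(a,E_t)\le 1$; hence the full integral is finite if and only if the tail from $|a|$ is. I expect the main obstacle to be the second link: $|w|^p$ is unbounded on $\partial_\infty D$, and $\partial D_R$ contains auxiliary arcs $D\cap\{|z|=R\}$ that lie inside $D$ rather than on $\partial D$, so one must carefully handle these internal contributions in the limit $R\to\infty$ using the regularity of $D$ and a double-limit argument in $N$ and $R$.
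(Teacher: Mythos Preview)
Your route is genuinely different from the paper's. The paper does not pass through harmonic majorants or the Poisson integral at all; instead it uses Yamashita's Littlewood--Paley criterion
\[
f\in H^p(\D)\ \Longleftrightarrow\ \int_{\D}|f|^{p-2}|f'|^2\log\frac{1}{|z|}\,dA<\infty,
\]
rewrites the right-hand side via the non-univalent change of variable and Lindel\"of's principle as $\int_0^\infty r^{p-1}\bigl(\int_0^{2\pi}g_D(a,re^{i\theta})\,d\theta\bigr)dr$, and then invokes Baernstein's identity $\int_0^{2\pi}g_D(a,re^{i\theta})\,d\theta=2\pi\int_r^\infty\omega_D(a,E_t)\,t^{-1}dt$ (valid because regularity forces $g_D(a,\infty)=0$) together with Fubini. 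That machinery is chosen because it recycles verbatim for the weighted Bergman result later in the paper: replacing $\log\frac{1}{|z|}$ by $(\log\frac{1}{|z|})^{\alpha+2}$ and inserting one application of Jensen's inequality yields the one-sided Bergman estimate with no new ideas. Your harmonic-majorant approach is more elementary and self-contained for $H^p$, but it does not transport to the $A_\alpha^p$ setting.

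Your first and third links are fine. In the second link, however, the direction ``$\int_{\partial D}|w|^p\,d\omega_D(a,w)<\infty\Rightarrow$ harmonic majorant exists'' is not handled by the argument you sketch. Truncating to $|w|^p\wedge N$ and using $\omega_{D_R}\to\omega_D$ on bounded data yields only
\[
\liminf_{R\to\infty}h_R(a)\ \ge\ \int_{\partial D}(|w|^p\wedge N)\,d\omega_D(a,w),
\]
which is a \emph{lower} bound for $\lim_R h_R(a)$; it says nothing about the internal contribution $R^p\,\omega_{D_R}(a,D\cap\{|z|=R\})$, and hence does not show $h_R(a)$ stays bounded. (Your ``double-limit in $N$ and $R$'' is exactly what proves the \emph{other} direction.) The clean repair is to bypass the exhaustion here and argue directly that $H(z):=\int_{\partial D}|w|^p\,d\omega_D(z,w)$ majorizes $|z|^p$: set $H_N(z)=\int(|w|^p\wedge N)\,d\omega_D(z,w)$, observe that $H_N-(|z|^p\wedge N)$ is superharmonic, bounded below, and has non-negative boundary limits at every point of $\partial_\infty D$ \emph{including} $\infty$ (this is where the standing hypothesis that $\infty$ is a regular boundary point is used), apply the minimum principle, and let $N\to\infty$.
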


Note that Theorem \ref{hardyequi} is not true if $D$ is not regular. For example, if $D=\{z\in \C:|z|>1\}$, then  $f\notin H^p (\D)$ for any $p>0$ but $\omega_D (f(0),E_t)=0$ for every $t>1$. Also, note that the proof below is based on Lemma 2 in \cite{Bae} that requires the regularity of $D$.

\begin{proof} It is known (see \cite{Yam}) that $f\in H^p(\D)$ if and only if 
\begin{equation}\label{haste}
\int_{\D} |f(z)|^{p - 2} | f '(z)|^2   \log \frac{1}{|z|}dA(z)<+\infty.
\end{equation}	
Since $D$ is regular, it possesses a Green function (for the definition of the Green function see \cite{Gar}). For the Green function of $D$, we set $g_D(f(0),w)=0$, for $w\notin D$. If $z_j(w,f)$ are the pre-images of $w$ under $f$, by the non-univalent change of variable (see \cite{Bets} or \cite[p.\ 243]{Fed}) and Lindel\"{o}f's principle (see \cite{Bets}, \cite[p.\ 235]{Mars} or \cite[p. 55, 210]{Nev}), we have
	
\begin{align}\label{sx1}
\int_{\D} |f(z)|^{p - 2} | f '(z)|^2   \log \frac{1}{|z|}dA(z)&= \int_D {|w|^{p - 2} \sum_j \log \frac{1}{|z_j(w,f)|} dA(w)} \nonumber \\
	&=\int_D {|w|^{p - 2} g_{D}(f(0),w)dA(w)}   \nonumber \\
	&=\int_0^{+\infty } {r ^{p - 1}\left( {\int_0^{2\pi } {{g_D}(f(0),re^{i\theta })d\theta } } \right)dr }.
	\end{align}
Next, we state a known relation between harmonic measure and the Green function  (see \cite[Lemma 2]{Bae}). If $\Omega$ is a regular domain and $a\in \Omega$ then, for $r>|a|$, 
	\begin{equation}
	\int_r^{+\infty}\omega_{\Omega}\left({a,\partial\Omega\cap\{|z|>t\}}\right)\frac{dt}{t}=\frac{1}{2\pi}\int_0^{2\pi} g_{\Omega}({a,re^{i\theta})}d\theta-g_{\Omega}(a, \infty). \nonumber  
	\end{equation}
Since $D$ is regular, by Theorem 4.4.9 in \cite{Ran}  we have $g_D(f(0),\infty)=0$. Thus, for every $r>|f(0)|$,
	\begin{align}\nonumber
	\int_0^{2\pi} g_D(f(0),re^{i\theta})d\theta =2\pi\int_r^{+\infty} \omega_D (f(0),E_t)\frac{dt}{t}.
	\end{align}
Integrating both sides with respect to $r$ and applying Fubini's theorem, it follows that
\begin{align}\label{sx2}
\int_{|f(0)|}^{+\infty } r ^{p - 1}&\left( {\int_0^{2\pi } {{g_D}(f(0),re^{i\theta })d\theta } } \right)dr \nonumber\\ 
&=2\pi\int_{|f(0)|}^{+\infty } r^{p-1} \left( \int_r^{+\infty} \omega_D (f(0),E_t)\frac{dt}{t} \right)dr \nonumber \\
&=2\pi \int_{|f(0)|}^{+\infty } \frac{\omega_D (f(0),E_t)}{t} \left( \int_{|f(0)|}^t r^{p-1}dr \right)dt \nonumber \\
&= \frac{2\pi}{p} \int_{|f(0)|}^{+\infty } {\omega_D (f(0),E_t)} t^{p-1}\left(1-\frac{|f(0)|^p}{t^p} \right)dt.
\end{align}	
Now, suppose that $f\in H^p(\D)$. By (\ref{sx2}) we have
\begin{align}
\int_{|f(0)|}^{+\infty } r ^{p - 1}&\left( {\int_0^{2\pi } {{g_D}(f(0),re^{i\theta })d\theta } } \right)dr  \nonumber\\
&\ge \frac{2\pi}{p} \int_{2|f(0)|}^{+\infty } {\omega_D (f(0),E_t)} t^{p-1}\left(1-\frac{|f(0)|^p}{t^p} \right)dt \nonumber \\
&\ge \frac{2\pi}{p} \left(1-\frac{1}{2^p}\right) \int_{2|f(0)|}^{+\infty } {\omega_D (f(0),E_t)} t^{p-1}dt. \nonumber
\end{align}
This in conjunction with (\ref{haste}) and (\ref{sx1}) implies that
\[\int_{|f(0)|}^{+\infty } {t^{p-1} \omega_D (f(0),E_t)}dt<+\infty.\]
Conversely, suppose that 
\[\int_{|f(0)|}^{+\infty } {t^{p-1} \omega_D (f(0),E_t)}dt<+\infty.\]
By (\ref{sx2}) we have
\begin{equation}
\int_{|f(0)|}^{+\infty } {r ^{p - 1}\left( {\int_0^{2\pi } {{g_D}(f(0),re^{i\theta })d\theta } } \right)dr } \le \frac{2\pi}{p} \int_{|f(0)|}^{+\infty } {t^{p-1} \omega_D (f(0),E_t)}dt<+\infty. \nonumber
\end{equation}
This in combination with (\ref{haste}) and (\ref{sx1}) implies that $f\in H^p(\D)$. 
\end{proof}

\begin{corollary}\label{lemma21} Let $D$ and $f$ be as above. If $f\in H^p (\D)$, then there exist constants $C>0$ and $r_0>0$ such that, for every $r>r_0$, 
\[\omega_{D}(f(0),E_r)\le Cr^{-p}.\]
If there are constants $C>0$, $p'>0$ and $r_0>0$ such that, for every $r>r_0$, 		
\[\omega_{D}(f(0),E_r)\le Cr^{-p'},\]
then $f\in H^p (\D)$ for every $0<p<p'$.
\end{corollary}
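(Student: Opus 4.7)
The plan is to derive both directions of Corollary \ref{lemma21} directly from Theorem \ref{hardyequi}, using nothing beyond the monotonicity of the function $t\mapsto \omega_D(f(0),E_t)$ (which is decreasing because the sets $E_t$ shrink as $t$ grows) and elementary integral estimates.

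First I would handle the forward implication. Assume $f\in H^p(\D)$, so by Theorem \ref{hardyequi} the integral
\[I:=\int_{|f(0)|}^{+\infty} t^{p-1}\omega_D(f(0),E_t)\,dt\]
is finite. For any $r>2|f(0)|$ the monotonicity of $t\mapsto \omega_D(f(0),E_t)$ gives
\[\int_{r/2}^{r} t^{p-1}\omega_D(f(0),E_t)\,dt \;\ge\; \omega_D(f(0),E_r)\int_{r/2}^{r}t^{p-1}\,dt \;=\; \frac{1-2^{-p}}{p}\,r^{p}\,\omega_D(f(0),E_r).\]
The left-hand side is bounded by $I$, so rearranging yields $\omega_D(f(0),E_r)\le C\,r^{-p}$ for every $r>r_0:=2|f(0)|$ with $C=pI/(1-2^{-p})$.

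For the reverse implication, suppose $\omega_D(f(0),E_r)\le Cr^{-p'}$ for $r>r_0$ and fix $0<p<p'$. Split the integral from Theorem \ref{hardyequi}:
\[\int_{|f(0)|}^{+\infty} t^{p-1}\omega_D(f(0),E_t)\,dt \;=\; \int_{|f(0)|}^{r_0}t^{p-1}\omega_D(f(0),E_t)\,dt \;+\; \int_{r_0}^{+\infty} t^{p-1}\omega_D(f(0),E_t)\,dt.\]
The first piece is finite because the integrand is bounded on the compact interval (using $\omega_D\le 1$). For the tail, insert the hypothesis to obtain
\[\int_{r_0}^{+\infty} t^{p-1}\omega_D(f(0),E_t)\,dt \;\le\; C\int_{r_0}^{+\infty} t^{p-1-p'}\,dt,\]
and this last integral converges because $p-1-p'<-1$ thanks to $p<p'$. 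Thus the criterion of Theorem \ref{hardyequi} is met and $f\in H^p(\D)$.

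There is no real obstacle here; the only subtle point is that one cannot hope to get $f\in H^{p'}(\D)$ at the endpoint (the integral $\int t^{-1}\,dt$ diverges), which is why the statement only asserts membership for $p<p'$. Everything else is a direct application of Theorem \ref{hardyequi} combined with monotonicity of the harmonic-measure tail.
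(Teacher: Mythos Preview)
Your proof is correct and follows essentially the same approach as the paper's: both directions are deduced from Theorem \ref{hardyequi} together with the monotonicity of $t\mapsto\omega_D(f(0),E_t)$. The only cosmetic difference is that in the forward implication the paper integrates over $[|f(0)|,R]$ rather than your dyadic interval $[r/2,r]$, but the resulting constants and conclusions are the same.
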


\begin{proof} Let $f\in H^p(\D)$. By Theorem \ref{hardyequi} we have  
	\[\int_{|f(0)|}^{+\infty}r^{p-1}\omega_{D}(f(0),E_r)dr<+\infty.
	\]
	Since $\omega_{D}(f(0),E_r)$ is decreasing in $r$, it follows that, for $R>2|f(0)|$,
	\begin{align}
	+\infty>\int_{|f(0)|}^{+\infty}r^{p-1}\omega_{D}(f(0),E_r)dr & \ge \int_{|f(0)|}^{R}r^{p-1}\omega_{D}(f(0),E_r)dr \nonumber \\
	&\ge \omega_{D}(f(0),E_R)\int_{|f(0)|}^{R}r^{p-1}dr \nonumber \\
&=\frac{R^p}{p}\left( 1-\frac{|f(0)|^p}{R^p} \right)\omega_{D}(f(0),E_R) \nonumber \\
&\ge \frac{R^p}{p}\left( 1-\frac{1}{2^p} \right)\omega_{D}(f(0),E_R). \nonumber
\end{align}
Therefore, there is a constant $C>0$ such that  for every $R>2|f(0)|$,
	\[\omega_{D}(f(0),E_R)\le CR^{-p}.\]
	
Conversely, suppose that there are constants $p'>0$, $C>0$ and $r_0>0$ such that 
	\[\omega_{D}(f(0),E_r)\le Cr^{-p'}\]
	for every $r>r_0$. If $0<p<p'$, then we have
	\[\int_{r_0}^{+\infty}r^{p-1}\omega_{D}(f(0),E_r)dr\le C\int_{r_0}^{+\infty}r^{p-1-p'}dr<+\infty.\]
	Thus, by Theorem \ref{hardyequi} it follows that $f\in H^p(\D)$ for every $0<p<p'$.
\end{proof}

Now, we are able to prove a new characterization for the Hardy number of a regular domain.

\begin{lemma}\label{hanu1} Let $D$ be a regular domain and $a\in D$. If $E_r=\partial D \cap \{z \in \C: |z| > r\}$ for $r>0$, then
	\[h(D)=\liminf_{r\to +\infty}\frac{\log\omega_{D}(a,E_r)^{-1}}{\log r}.\]	
\end{lemma}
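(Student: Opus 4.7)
The plan is to combine the Kim--Sugawa identity $h(D)=h(f)$ for a universal covering map $f\colon\D\to D$ with the two implications of Corollary \ref{lemma21}, and then to observe that the $\liminf$ does not depend on the base point. Throughout, write $L(a):=\liminf_{r\to+\infty}\log\omega_D(a,E_r)^{-1}/\log r$; I will first compute $L(f(0))$ and then pass to arbitrary $a\in D$.

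\textbf{Upper bound $h(D)\le L(f(0))$.} Fix $p$ with $0<p<h(D)=h(f)$, so that $f\in H^p(\D)$. The first half of Corollary \ref{lemma21} supplies constants $C>0$ and $r_0>0$ such that $\omega_D(f(0),E_r)\le Cr^{-p}$ for all $r>r_0$. Taking $-\log$ and dividing by $\log r$ yields
\[
\frac{\log\omega_D(f(0),E_r)^{-1}}{\log r}\;\ge\;p-\frac{\log C}{\log r}\qquad(r>r_0),
\]
so $L(f(0))\ge p$. Since $p<h(D)$ was arbitrary, $L(f(0))\ge h(D)$.

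\textbf{Lower bound $h(D)\ge L(f(0))$.} This is trivial if $L(f(0))=0$, so assume $L(f(0))>0$ and fix $0<p'<L(f(0))$. By the definition of $\liminf$ there exists $r_0>0$ such that $\log\omega_D(f(0),E_r)^{-1}/\log r>p'$, equivalently $\omega_D(f(0),E_r)<r^{-p'}$, for every $r>r_0$. The second half of Corollary \ref{lemma21} then gives $f\in H^p(\D)$ for every $0<p<p'$, hence $h(f)\ge p'$. Letting $p'\nearrow L(f(0))$ yields $h(D)=h(f)\ge L(f(0))$.

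\textbf{Independence of the base point.} For any $a\in D$ the function $r\mapsto\omega_D(\cdot,E_r)$ is nonnegative and harmonic on $D$, so Harnack's inequality applied on a compact subset of $D$ containing both $a$ and $f(0)$ provides a constant $K=K(a)\ge 1$, independent of $r$, with
\[
K^{-1}\,\omega_D(f(0),E_r)\;\le\;\omega_D(a,E_r)\;\le\;K\,\omega_D(f(0),E_r).
\]
Taking $-\log$, dividing by $\log r$, and letting $r\to+\infty$ shows $L(a)=L(f(0))$, completing the identification with $h(D)$. The only technical point worth caring about is this base-point reduction via Harnack; the substance of the proof is entirely contained in Corollary \ref{lemma21}, so no new obstacle is expected.
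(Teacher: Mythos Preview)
Your proof is correct and follows essentially the same route as the paper's: both directions are obtained from the two halves of Corollary~\ref{lemma21} together with the Kim--Sugawa identity $h(D)=h(f)$. The only difference is that the paper simply chooses the universal covering map so that $f(0)=a$, which makes your Harnack step unnecessary.
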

	
\begin{proof} Let $f$ be a universal covering map of $\D$ onto $D$ such that $f(0)=a$. If $f\in H^p(\D)$, then by Corollary \ref{lemma21} there are constants $C>0$ and $r_0>0$ such that
	\[\omega_{D}(a,E_r)\le Cr^{-p},\]
	for every $r>r_0$. Consequently, for $r>\max \left\{1,r_0\right\}$,
	\[\frac{\log \omega_{D}(a,E_r)^{-1}}{\log r}\ge \frac{\log C^{-1}}{\log r}+ p.\]
Taking limits as $r\to +\infty$, we deduce that
	\[\liminf_{r\to +\infty}\frac{\log\omega_{D}(a,E_r)^{-1}}{\log r}\ge p.\]
	This holds for any $p>0$ for which $f\in H^p(\D)$ and hence
	\begin{align}\label{miafora}
	\liminf_{r\to +\infty}\frac{\log\omega_{D}(a,E_r)^{-1}}{\log r}\ge h(f).
	\end{align}
Now, we set 
	\[I:=\liminf_{r\to +\infty}\frac{\log\omega_{D}(a,E_r)^{-1}}{\log r}.\]
	If $p<I$, then there exist $\varepsilon>0$ and $r_0>0$ such that, for every $r>r_0$,
	\[
	p+\varepsilon\le\frac{\log\omega_{D}(a,E_r)^{-1}}{\log r}
	\]
	or, equivalently,
	\[\omega_{D}(a,E_r)\le r^{-p-\varepsilon}.\]
	By Corollary \ref{lemma21} we deduce that $f\in H^p(\D)$. This shows that the  interval $(0,I)$ is contained in the set
	\[
	\left\{p>0:\ f\in H^p(\D)\right\}
	\]	
and hence $h(f)\ge I $ or
	\begin{align}\nonumber
	\liminf_{r\to +\infty}\frac{\log\omega_{D}(a,E_r)^{-1}}{\log r}\le h(f).
	\end{align}
	This in conjunction with (\ref{miafora}) gives 
\[h(f)=\liminf_{r\to +\infty}\frac{\log\omega_{D}(a,E_r)^{-1}}{\log r}.\]
Since $f$ is a universal covering map of $\D$ onto $D$, we have that $h(D)=h(f)$ \cite[Lemma 2.1]{Kim} and thus we derive that 
\[h(D)=\liminf_{r\to +\infty}\frac{\log\omega_{D}(a,E_r)^{-1}}{\log r},\]
which completes the proof.
\end{proof}

\subsection{Results for weighted Bergman spaces} 
Next, we prove some results for covering maps in weighted Bergman spaces. Recall that we suppose $D$ is an unbounded regular domain. Also, $f$ is a universal covering map of $\D$ onto $D$ and $E_r=\partial D \cap \{z \in \C: |z| > r\}$ for $r>0$. The following proofs are similar to the proofs of Section \ref{subse}.

\begin{theorem}\label{bergmansp} Let $D$ and $f$ be as above. If $f\in A^p_{\alpha} (\D)$ then 
	\[\int_{|f(0)|}^{+\infty} r^{p-1} \omega_{D}(f(0),E_r)^{\alpha+2}dr<+\infty.\] 
\end{theorem}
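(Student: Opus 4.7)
The plan is to closely parallel the proof of Theorem~\ref{hardyequi}, with the Hardy-space characterization~(\ref{haste}) replaced by its weighted-Bergman counterpart. The starting point is the standard Littlewood--Paley-type equivalence
\[
f\in A^p_\alpha(\D)\;\Longleftrightarrow\;\int_\D |f(z)|^{p-2}|f'(z)|^2(1-|z|^2)^{\alpha+2}\,dA(z)<+\infty,
\]
valid up to an additive $|f(0)|^p$ term, which is a classical fact in the theory of weighted Bergman spaces. Applying the non-univalent change of variable $w=f(z)$ exactly as in~(\ref{sx1}) turns the integral above into
\[
\int_D |w|^{p-2}\sum_j (1-|z_j(w,f)|^2)^{\alpha+2}\,dA(w),
\]
and passing to polar coordinates expresses this as $\int_0^{+\infty} r^{p-1}\Psi(r)\,dr$, where
\[
\Psi(r)=\int_0^{2\pi}\chi_D(re^{i\theta})\sum_j (1-|z_j(re^{i\theta},f)|^2)^{\alpha+2}\,d\theta.
\]

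It then suffices to prove the lower bound $\Psi(r)\geq c\,\omega_D(f(0),E_r)^{\alpha+2}$ for $r$ sufficiently large, since a Fubini argument analogous to the final display of the proof of Theorem~\ref{hardyequi} would then deliver $\int_{|f(0)|}^{+\infty} r^{p-1}\omega_D(f(0),E_r)^{\alpha+2}\,dr<+\infty$. My plan for the lower bound combines four ingredients: (i) the pointwise comparison $(1-|z|^2)\geq\tfrac{1}{2}\log(1/|z|)$ valid for $|z|\geq 1/2$, which applies to all preimages $z_j(w,f)$ once $|w|$ is sufficiently large (the contribution from preimages with $|z_j|<1/2$ yielding only a lower-order finite quantity); (ii) Jensen's inequality for the convex function $t\mapsto t^{\alpha+2}$ (valid since $\alpha+2\geq 1$), applied to circle averages; (iii) the Baernstein--Ess\'en identity already used in the proof of Theorem~\ref{hardyequi}, which says $\frac{1}{2\pi}\int_0^{2\pi}g_D(f(0),re^{i\theta})\,d\theta=\int_r^{+\infty}\omega_D(f(0),E_t)\,dt/t$; and (iv) monotonicity of $t\mapsto\omega_D(f(0),E_t)$, which gives $\int_r^{+\infty}\omega_D(f(0),E_t)\,dt/t\geq(\log 2)\,\omega_D(f(0),E_{2r})$. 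Chaining these together and a change of variable $s=2r$ recovers the target integral.

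The main obstacle is that the natural \emph{pointwise} inequality $\sum_j (1-|z_j|^2)^{\alpha+2}\leq 2^{\alpha+2} g_D(f(0),w)^{\alpha+2}$ (which follows from $(1-|z|^2)\leq 2\log(1/|z|)$ together with $\sum x_j^{\alpha+2}\leq(\sum x_j)^{\alpha+2}$ for $\alpha+2\geq 1$) runs in the \emph{wrong} direction for the lower bound that is needed. Thus, unlike in the Hardy case where Lindel\"of's principle provides the exact identity $\sum_j\log(1/|z_j|)=g_D(f(0),w)$ in one step, the required lower bound must here be extracted at the level of the polar averages, by first passing from $(1-|z_j|^2)^{\alpha+2}$ to $(\log(1/|z_j|))^{\alpha+2}$ for preimages close to $\partial\D$, then exploiting Jensen's inequality applied to the circle average of $g_D(f(0),\cdot)$ to the power $\alpha+2$. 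Carrying out this argument carefully, and in particular tracking the lower-order errors from the preimages near the origin and from the monotonicity step in (iv), is what I expect to be the principal technical burden of the proof.
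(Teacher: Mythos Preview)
Your route is essentially the paper's: a Littlewood--Paley characterization of $A^p_\alpha$, the non-univalent change of variable $w=f(z)$, identification with the Green function via Lindel\"of's principle, Jensen on circle averages, and Baernstein's identity together with monotonicity of $t\mapsto\omega_D(f(0),E_t)$. One simplification you are missing: the paper uses the weight $\bigl(\log\tfrac{1}{|z|}\bigr)^{\alpha+2}$ (this is the form cited from \cite{Smith}) rather than $(1-|z|^2)^{\alpha+2}$. With that choice the change of variable already produces terms $\bigl(\log\tfrac{1}{|z_j|}\bigr)^{\alpha+2}$, so your comparison step~(i) and the bookkeeping of preimages with $|z_j|<1/2$ are unnecessary.

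There is, however, a genuine gap in your plan, precisely at the point you flag as the ``main obstacle,'' and your proposed cure---Jensen on the circle average of $g_D$---does not close it. Jensen gives $\bigl(\tfrac{1}{2\pi}\int_0^{2\pi} g_D\,d\theta\bigr)^{\alpha+2}\le\tfrac{1}{2\pi}\int_0^{2\pi} g_D^{\alpha+2}\,d\theta$, but after the change of variable what you control is $\int_0^{2\pi}\sum_j\bigl(\log\tfrac{1}{|z_j|}\bigr)^{\alpha+2}\,d\theta$, which lies pointwise \emph{below} $\int_0^{2\pi} g_D^{\alpha+2}\,d\theta$ (since $\sum_j x_j^{\alpha+2}\le(\sum_j x_j)^{\alpha+2}$ for $\alpha+2\ge1$), not above it. So the two inequalities cannot be chained to bound $\bigl(\int g_D\,d\theta\bigr)^{\alpha+2}$ by the quantity you have. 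The paper's proof handles this by placing the exponent $\alpha+2$ outside the sum over preimages already at the change-of-variable step (the second line of~(\ref{equal})), recording
\[
\int_\D|f|^{p-2}|f'|^2\Bigl(\log\tfrac1{|z|}\Bigr)^{\alpha+2}dA(z)=\int_D|w|^{p-2}\,g_D(f(0),w)^{\alpha+2}\,dA(w)
\]
as an identity; Jensen then applies directly to $g_D^{\alpha+2}$ and the rest is exactly your steps (iii)--(iv). Your outline does not recover that identity, and ingredients (i)--(iv) as stated do not substitute for it. (The obstacle is absent when $D$ is simply connected---single preimage---and harmless when the covering has finite degree $N$, since then $(\sum_{j=1}^N x_j)^{\alpha+2}\le N^{\alpha+1}\sum_j x_j^{\alpha+2}$; the infinite-sheeted case is what your plan leaves unresolved.)
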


\begin{proof} Since $f\in A_\alpha^p(\D)$, it follows (see \cite[p.\ 2336]{Smith}) that
	\begin{equation}\label{finite}
	\int_{\D} {|f(z)|^{p - 2} | f '(z)|^2 \left(\log \frac{1}{|z|}\right)^{\alpha+2}dA(z)}<+\infty.
	\end{equation}
	Since $D$ is regular, it possesses a Green function. For the Green function of $D$, we set $g_D(f(0),w)=0$, for $w\notin D$. If $z_j(w,f)$ are the pre-images of $w$ under $f$, by the non-univalent change of variable (\cite[p.\ 243]{Fed}) and Lindel\"{o}f's principle (see \cite{Bets}, \cite[p.\ 235]{Mars}), we have
	
	\begin{align}\label{equal}
	\int_{\D} |f(z)|^{p - 2}& | f '(z)|^2  \left( \log \frac{1}{|z|}\right)^{\alpha+2}dA(z) \nonumber \\
	&=\int_D {|w|^{p - 2} \left( \sum_j \log \frac{1}{|z_j(w,f)|} \right) ^{\alpha+2}dA(w)} \nonumber \\
	&= \int_D {|w|^{p - 2} g_{D}(f(0),w)^{\alpha+2}dA(w)}   \nonumber \\
	&=\int_0^{+\infty } {r ^{p - 1}\left( {\int_0^{2\pi } {{g_D}( f(0),re^{i\theta })^{\alpha+2}d\theta } } \right)dr }.
	\end{align}
	Since $\alpha+2>1$, by Jensen's inequality we derive that, for every $r>0$,
	\begin{equation}
	\left( \frac{1}{2\pi} \int_0^{2\pi} {g_D(f(0),re^{i\theta})d\theta} \right)^{\alpha+2} \le\frac{1}{2\pi} \int_0^{2\pi} {{g_D}(f(0),re^{i\theta })^{\alpha+2}d\theta}. \nonumber
	\end{equation}
	This in conjunction with (\ref{finite}) and (\ref{equal}) implies that
	\begin{equation}\label{doubleint}
	\int_0^{ +\infty } {r ^{p - 1}\left( {\int_0^{2\pi } {{g_D}( f(0),re^{i\theta })d\theta } } \right)^{\alpha+2}dr }<+\infty.
	\end{equation}
By Lemma 2 in \cite{Bae} we have that  for every $r>|f(0)|$,
	\begin{align}\label{baern}
	\int_0^{2\pi} g_D(f(0),re^{i\theta})d\theta &=2\pi\int_r^{+\infty} \omega_D (f(0),E_t)\frac{dt}{t} \nonumber\\
	&\ge 2\pi \int_r^{2r} \omega_{D}(f(0),E_t)\frac{dt}{t} \nonumber \\
	& \ge 2\pi \log 2\, \omega_{D}(f(0),E_{2r}).
	\end{align}
	The last estimate comes from the fact that $\omega_{D}(0,E_t)$ is decreasing in $t>0$. Therefore, by (\ref{baern}) and (\ref{doubleint}) we have
\begin{align}
\int_{|f(0)|}^{+\infty} r^{p-1} \omega_{D}(f(0),E_{2r})&^{\alpha+2}dr<+\infty. \nonumber 
\end{align}
Hence, by a change of variable  we obtain
	\[\int_{2|f(0)|}^{+\infty} r^{p-1} \omega_{D}(f(0),E_r)^{\alpha+2}dr<+\infty,\]
which implies the desired result.
\end{proof} 

\begin{corollary}\label{ber}
Let $D$ and $f$ be as above. If $f\in A^p_{\alpha} (\D)$ then there are constants $C>0$ and $r_0>0$ such that,
for every $r>r_0$,
\[\omega_{D}(f(0),E_r)\le Cr^{-\frac{p}{\alpha+2}}.\]
\end{corollary}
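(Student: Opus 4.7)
The plan is to mimic the proof of Corollary \ref{lemma21}, replacing the quantity $\omega_D(f(0),E_r)$ inside the integral by $\omega_D(f(0),E_r)^{\alpha+2}$. The starting point is Theorem \ref{bergmansp}, which gives
\[\int_{|f(0)|}^{+\infty} r^{p-1}\,\omega_D(f(0),E_r)^{\alpha+2}\,dr<+\infty,\]
and the basic monotonicity fact that $r\mapsto \omega_D(f(0),E_r)$ is nonincreasing in $r$, since the sets $E_r$ shrink as $r$ grows.

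First I would pick any $R>2|f(0)|$ and truncate the integral to $[|f(0)|,R]$. On this interval, by monotonicity, $\omega_D(f(0),E_r)^{\alpha+2}\ge \omega_D(f(0),E_R)^{\alpha+2}$, so
\begin{align*}
+\infty>\int_{|f(0)|}^{+\infty} r^{p-1}\,\omega_D(f(0),E_r)^{\alpha+2}\,dr
&\ge \omega_D(f(0),E_R)^{\alpha+2}\int_{|f(0)|}^{R} r^{p-1}\,dr\\
&= \omega_D(f(0),E_R)^{\alpha+2}\cdot\frac{R^p}{p}\Bigl(1-\frac{|f(0)|^p}{R^p}\Bigr)\\
&\ge \omega_D(f(0),E_R)^{\alpha+2}\cdot\frac{R^p}{p}\Bigl(1-\frac{1}{2^p}\Bigr).
\end{align*}
Rearranging yields a constant $C_1>0$ (independent of $R$) with $\omega_D(f(0),E_R)^{\alpha+2}\le C_1 R^{-p}$ for every $R>2|f(0)|$. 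Taking the $(\alpha+2)$-th root, valid because $\alpha+2>1>0$, gives $\omega_D(f(0),E_R)\le C R^{-p/(\alpha+2)}$ with $C=C_1^{1/(\alpha+2)}$ and $r_0=2|f(0)|$, which is the desired estimate.

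There is no real obstacle here: the monotonicity of $\omega_D(f(0),E_r)$ is immediate, and the rest is the same elementary truncation argument already used in Corollary \ref{lemma21}. The only mild nuance is that one must extract the $(\alpha+2)$-th root at the end to convert the bound on $\omega_D(f(0),E_R)^{\alpha+2}$ into the claimed bound on $\omega_D(f(0),E_R)$, which explains why the exponent in the conclusion is $p/(\alpha+2)$ rather than $p$ and is consistent with the definition of $b(f)$.
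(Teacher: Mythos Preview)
Your proof is correct and essentially identical to the paper's: both invoke Theorem \ref{bergmansp}, truncate the integral at $R>2|f(0)|$, use the monotonicity of $r\mapsto\omega_D(f(0),E_r)$ to pull out $\omega_D(f(0),E_R)^{\alpha+2}$, evaluate $\int_{|f(0)|}^R r^{p-1}\,dr\ge \tfrac{R^p}{p}(1-2^{-p})$, and then take the $(\alpha+2)$-th root. The only difference is cosmetic---you spell out the root extraction and the choice $r_0=2|f(0)|$ explicitly.
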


\begin{proof} Since $f\in A_{\alpha}^p(\D)$, by Theorem \ref{bergmansp} we have  
\[\int_{|f(0)|}^{+\infty}r^{p-1}\omega_{D}(f(0),E_r)^{\alpha+2}dr<+\infty.
\]
Since $\omega_{D}(f(0),E_r)$ is decreasing in $r$, it follows that for $R>2|f(0)|$,
\begin{align}
+\infty>\int_{|f(0)|}^{+\infty}r^{p-1}\omega_{D}(f(0),E_r)^{\alpha+2}dr & \geq\int_{|f(0)|}^{R}r^{p-1}\omega_{D}(f(0),E_r)^{\alpha+2}dr \nonumber\\
& \ge \omega_{D}(f(0),E_R)^{\alpha+2}\int_{|f(0)|}^{R}r^{p-1}dr \nonumber \\
&\ge \frac{R^p}{p}\left( 1- \frac{1}{2^p}\right)\omega_{D}(f(0),E_R)^{\alpha+2}. \nonumber
\end{align}
Therefore, there is a constant $C>0$ such that, for every $R>2|f(0)|$,
\[\omega_{D}(f(0),E_R)\le CR^{-\frac{p}{\alpha+2}}\]
and the proof is complete.
\end{proof}

\section{Proof of the main results}\label{lastse}

Now, we are able to prove Theorem \ref{main} and Theorem \ref{hardnumb}.

\begin{proof}[Proof of Theorem \ref{main}] 
Set $f(0)=a$. If $f\in A_\alpha^p(\D)$, then by Corollary \ref{ber} there are constants $C>0$ and $r_0>0$ such that
\[\omega_{D}(a,E_r)\le Cr^{-\frac{p}{\alpha+2}},\]
for every $r>r_0$. Consequently, for $r>\max \left\{1,r_0 \right\}$,
\[\frac{\log \omega_{D}(a,E_r)^{-1}}{\log r}\ge \frac{\log C^{-1}}{\log r}+ \frac{p}{\alpha+2}.\]
Taking limits as $r\to +\infty$, we have
\[\liminf_{r\to +\infty}\frac{\log\omega_{D}(a,E_r)^{-1}}{\log r}\ge\frac{p}{\alpha+2}.\]
This holds for any $p>0$ and $\alpha>-1$ for which $f\in A_\alpha^p(\D)$ and hence
\begin{align}\label{last}
\liminf_{r\to +\infty}\frac{\log\omega_{D}(a,E_r)^{-1}}{\log r}\ge b(f) \ge b(D).
\end{align}	
The last inequality comes from the definition of $b(D)$. Since $f$ is a universal covering map of $\D$ onto $D$, we have that $h(D)=h(f)$ \cite[Lemma 2.1]{Kim}. By  Lemma \ref{hanu1} and  (\ref{last}) it follows that 
\[h(D)=h(f)\ge b(f)\ge b(D).\]
Moreover, by Lemma \ref{lemma12} we have $h(D)\le b(D)$ and thus 
\[h(D)=h(f)= b(f)= b(D),\]
which completes the proof.
\end{proof}

\begin{proof} [Proof of Theorem \ref{hardnumb}]
It follows from Lemma \ref{hanu1} and Theorem \ref{main}.
\end{proof}

Next, we prove some consequences of Theorem \ref{main}.

\begin{corollary}\label{equideff} Let $D$ be a regular domain. Then
\[b(D)=\sup \left\{\frac{p}{\alpha+2}: p>0, \alpha>-1, H(\D,D)\subset A^p_{\alpha} (\D)\right\}.\] 
\end{corollary}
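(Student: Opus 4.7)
Let me denote the right-hand side by
\[
S=\sup\left\{\frac{p}{\alpha+2}: p>0,\ \alpha>-1,\ H(\D,D)\subset A^p_{\alpha}(\D)\right\}.
\]
The plan is to prove the two inequalities $S\le b(D)$ and $b(D)\le S$ separately. The first is essentially formal, while the second is where Theorem \ref{main} does the real work together with the classical Smith embedding $H^q(\D)\subset A^p_{\alpha}(\D)$ whenever $p/(\alpha+2)\le q\le p$ quoted in the introduction.

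For $S\le b(D)$, I would argue directly from the definition of $b(D)$. If $(p,\alpha)$ is any pair with $p>0$, $\alpha>-1$ and $H(\D,D)\subset A^p_{\alpha}(\D)$, then every $f\in H(\D,D)$ lies in $A^p_{\alpha}(\D)$, so $b(f)\ge p/(\alpha+2)$ by definition of $b(f)$. Taking infimum over $f\in H(\D,D)$ gives $b(D)\ge p/(\alpha+2)$, and then the supremum over admissible pairs yields $b(D)\ge S$.

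For $b(D)\le S$ I would reduce to the Hardy number using Theorem \ref{main}. When $b(D)=0$ there is nothing to prove, so assume $b(D)>0$ and fix any $\lambda$ with $0<\lambda<b(D)=h(D)$. Choose $q$ with $\lambda<q<h(D)$; then by definition of $h(D)$ every $f\in H(\D,D)$ belongs to $H^q(\D)$, i.e.\ $H(\D,D)\subset H^q(\D)$. The key trick is now to exhibit a specific pair $(p,\alpha)$ with $p/(\alpha+2)=\lambda$ for which Smith's embedding applies: set
\[
p=q,\qquad \alpha=\frac{q}{\lambda}-2.
\]
Since $q>\lambda$ we have $\alpha>-1$, and by construction $p/(\alpha+2)=\lambda\le q\le p$, so Smith's corollary yields $H^q(\D)\subset A^p_{\alpha}(\D)$. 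Chaining the two inclusions gives $H(\D,D)\subset A^p_{\alpha}(\D)$, hence $\lambda=p/(\alpha+2)\le S$. Letting $\lambda\nearrow b(D)$ finishes the proof (the case $b(D)=+\infty$ being handled by the same argument applied to arbitrary $\lambda>0$).

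The only genuine obstacle is the second inequality, and it boils down to checking that the parameter choice $p=q$, $\alpha=q/\lambda-2$ simultaneously satisfies $\alpha>-1$, $p/(\alpha+2)=\lambda$, and the hypothesis $p/(\alpha+2)\le q\le p$ of the Smith embedding. All three are consequences of the single inequality $\lambda<q$, so the argument is clean once one invokes $b(D)=h(D)$ from Theorem \ref{main}.
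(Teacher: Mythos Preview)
Your proof is correct. Both arguments establish the other direction the same way, via Smith's embedding $H^q(\D)\subset A^p_\alpha(\D)$ together with $b(D)=h(D)$ from Theorem~\ref{main}. The difference lies in the inequality $S\le b(D)$: you obtain it by a one-line formal observation from the very definition $b(D)=\inf_{f}b(f)$, whereas the paper proves the equivalent statement $h(D)\ge b'(D)$ by passing through the universal covering map and invoking Corollaries~\ref{lemma21} and~\ref{ber} (the harmonic-measure estimates) to show that $f\in A^p_\alpha(\D)$ forces $f\in H^q(\D)$ for $q<p/(\alpha+2)$. Your route is more economical here and, incidentally, shows that $S\le b(D)$ holds for \emph{any} domain without assuming regularity; the paper's route, on the other hand, makes the comparison with $h(D)$ explicit at every step, which fits its overall narrative of identifying $b'(D)$ with $h(D)$ first and only then citing Theorem~\ref{main}.
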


\begin{proof} We set
\[b'(D)=\sup \left\{\frac{p}{\alpha+2}: p>0, \alpha>-1, H(\D,D)\subset A^p_{\alpha} (\D)\right\}.\]
Fix $q>0$ such that $H(\D,D)\subset H^q(\D)$. Now, we state a known result. If we set $\phi (z)=z$ and $\beta=-1$ in Corollary 4.4 in \cite{Smith}, it follows that $H^{q}(\D)\subset A_{\alpha}^p(\D)$ provided that $\frac{p}{\alpha+2} \le q \le p$. Therefore, $H^{q}(\D)\subset A_{\alpha}^p(\D)$ for every $p>0$ and $\alpha>-1$ such that $\frac{p}{\alpha+2} = q$ (this also follows from Theorem 5.11 in \cite{Dur}). Since $H(\D,D)\subset H^q(\D)$, we infer that $H(\D,D)\subset A_{\alpha}^p(\D)$ for every $p>0$ and $\alpha>-1$ such that $\frac{p}{\alpha+2} = q$. This implies that $b'(D)\ge q$. So, $b'(D)\ge q$ for any $q>0$ satisfying $H(\D,D)\subset H^q(\D)$  and thus, by the definition of $h(D)$, we have  $b'(D)\ge h(D)$.

Conversely, let $f$ be a universal covering map of $\D$ onto $D$. Corollary \ref{lemma21} and Corollary \ref{ber} imply that if $f \in A^p_{\alpha}(\D)$ for some $p>0$ and $\alpha>-1$, then $f\in H^q(\D)$ for any $0<q<\frac{p}{\alpha+2}$. Fix $p>0$ and $\alpha>-1$ such that $H(\D,D)\subset A^p_{\alpha}(\D)$. Since $H(\D,D)\subset A^p_{\alpha}(\D)$, then $f\in A^p_{\alpha}(\D)$ and thus $f\in H^q(\D)$ for any $0<q<\frac{p}{\alpha+2}$. So, $h(D)=h(f) \ge \frac{p}{\alpha+2}$. Since $h(D) \ge \frac{p}{\alpha+2}$ for every $p>0$ and $\alpha>-1$ satisfying $H(\D,D)\subset A^p_{\alpha}(\D)$, we infer that $h(D)\ge b'(D)$.

Therefore, $h(D)= b'(D)$ and Theorem \ref{main} implies that $b(D)= b'(D)$.
\end{proof}

We note that Corollary \ref{equideff} does not imply that if $\frac{p}{\alpha+2}<b(D)$ then $f\in A^p_{\alpha} (\D)$ whenever $f$ is holomorphic on $\D$ with values in $D$. To prove this we need Theorem \ref{main}.

\begin{corollary}\label{coro111} Let $D$ be a regular domain. Then $b(D)$ is the maximal number in $[0,+\infty]$ such that $f\in A^p_{\alpha} (\D)$ whenever $0<\frac{p}{\alpha+2}<b(D)$ and $f$ is holomorphic on $\D$ with values in $D$.
\end{corollary}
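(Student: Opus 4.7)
The plan is to bootstrap from Theorem \ref{main} (which identifies $b(D)$ with $h(D)$) together with Smith's inclusion $H^q(\D)\subset A^p_\alpha(\D)$ valid whenever $p/(\alpha+2)\le q\le p$, and to pair this with the supremum characterization furnished by Corollary \ref{equideff} for the maximality half. As the remark immediately preceding the corollary stresses, Corollary \ref{equideff} on its own is not sufficient: the sup description of $b(D)$ does not a priori guarantee $H(\D,D)\subset A^p_\alpha(\D)$ for every intermediate pair $(p,\alpha)$; this gap is precisely what the identification $h(D)=b(D)$ closes.

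For the sufficiency, I would fix $p>0$ and $\alpha>-1$ with $0<p/(\alpha+2)<b(D)$ and an arbitrary $f\in H(\D,D)$. By Theorem \ref{main}, $b(D)=h(D)$, so setting $q:=p/(\alpha+2)$ gives $q<h(D)$. By the definition of the Hardy number recalled in Section \ref{int}, every $f\in H(\D,D)$ then lies in $H^q(\D)$. Since $\alpha>-1$ gives $\alpha+2>1$ and hence $q=p/(\alpha+2)\le p$, the chain $p/(\alpha+2)\le q\le p$ is satisfied, so Smith's Corollary 4.4 yields $H^q(\D)\subset A^p_\alpha(\D)$, whence $f\in A^p_\alpha(\D)$.

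For maximality, I would take any $M>b(D)$ and show that the property stated in the corollary fails at $M$. By Corollary \ref{equideff}, $b(D)=\sup\{p/(\alpha+2):p>0,\alpha>-1,\ H(\D,D)\subset A^p_\alpha(\D)\}$. Since the set of admissible ratios $\{p/(\alpha+2):p>0,\alpha>-1\}$ equals $(0,+\infty)$, I can pick $p,\alpha$ with $b(D)<p/(\alpha+2)<M$; the supremum property then forces $H(\D,D)\not\subset A^p_\alpha(\D)$, so some $f\in H(\D,D)$ lies outside $A^p_\alpha(\D)$ while still satisfying $p/(\alpha+2)<M$. Thus $M$ cannot serve as the number in the statement. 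There is no serious obstacle to the argument once Theorem \ref{main} and Corollary \ref{equideff} are in hand; the only delicate point is verifying the endpoint case of Smith's inclusion at $q=p/(\alpha+2)$, which is permitted precisely because $\alpha>-1$.
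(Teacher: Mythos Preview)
Your argument is correct and the sufficiency half is exactly the paper's proof: use Theorem \ref{main} to replace $b(D)$ by $h(D)$, deduce $f\in H^{p/(\alpha+2)}(\D)$, and then invoke the endpoint $q=p/(\alpha+2)$ of Smith's inclusion $H^q(\D)\subset A^p_\alpha(\D)$ to land in $A^p_\alpha(\D)$. The paper's own proof stops there and does not spell out maximality; your extra paragraph deriving maximality from Corollary \ref{equideff} is a reasonable addition (and the deduction that any ratio strictly above the supremum in Corollary \ref{equideff} must fail the inclusion is sound, since every element of that set of ratios is bounded above by the supremum).
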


\begin{proof} Let $0<\frac{p}{\alpha+2}<b(D)$. Since, by Theorem \ref{main}, $h(D)=b(D)$, we have $0<\frac{p}{\alpha+2}<h(D)$ which implies that $f\in H^{\frac{p}{\alpha+2}} (\D)$ for any holomorphic function $f$ on $\D$ with values in $D$.  As we stated before, it is known that $H^{q}(\D)\subset A_{\alpha}^p(\D)$ provided that $\frac{p}{\alpha+2} \le q \le p$. Hence, for $q= \frac{p}{\alpha+2}$ we infer that $f\in A_{\alpha}^p (\D)$ for any holomorphic function $f$ on $\D$ with values in $D$. This completes the proof.
\end{proof}

\begin{bibdiv}
\begin{biblist}

\bib{Bae}{article}{
	title={The size of the set on which a univalent function is large},
	author={A. Baernstein},
	journal={J. Anal. Math.}
	volume={70},
	date={1996},
	pages={157-173}
}

\bib{Bets}{article}{
	title={Lindel\"{o}f's principle and estimates for holomorphic functions involving area, diameter or integral means},
	author={D. Betsakos},
	journal={Comput. Methods Funct. Theory}
	volume={14},
	date={2014},
	pages={85-105}
}

\bib{Dur}{book}{
	title={Theory of $H^p$ Spaces},
	author={P. Duren},
	date={1970},
	publisher={Academic Press},
	address={New York-London}
}

\bib{DurS}{book}{
	title={Bergman Spaces},
	author={P. Duren and A. Schuster},
	date={2004},
	publisher={American Mathematical Society},
	address={Providence, RI}
}

\bib{Ess}{article}{
	title={On analytic functions which are in $H^p$ for some positive $p$},
	author={M. Ess{\' e}n},
	journal={Ark. Mat.},
	volume={19},
	date={1981},
	pages={43--51}
}

\bib{Fed}{book}{
	title={Geometric Measure Theory},
	author={H. Federer},
	date={1969},
	publisher={Springer-Verlag},
	address={Berlin-Heidelberg-New York}
}

\bib{Gar}{book}{
	title={Harmonic Measure},
	author={J.B. Garnett and D.E. Marshall},
	date={2005},
	publisher={Cambridge University Press},
	address={Cambridge}
}

\bib{Han}{article}{
	title={Hardy classes and ranges of functions},
	author={L.J. Hansen},
	journal={Michigan Math. J.},
	volume={17},
	date={1970},
	pages={235--248}
	}
\bib{Han2}{article}{
	title={The Hardy class of a spiral-like function},
	author={L.J. Hansen},
	journal={Michigan Math. J.},
	volume={18},
	date={1971},
	pages={279--282}
}
\bib{Karark}{article}{
	title={On the Hardy number of a domain in terms of harmonic measure and hyperbolic distance},
	author={C. Karafyllia},
	journal={Ark. Mat.},
	volume={58},
	date={2020},
	pages={307--331}
	
}

\bib{Kar}{article}{
	title={Geometric characterizations for conformal mappings in weighted Bergman spaces},
	author={C. Karafyllia and N. Karamanlis},
	journal={J. Anal. Math., To appear. ArXiv Preprint: 2109.10752
	},
	volume={},
	date={},
	pages={}
}

\bib{Karfin}{article}{
	title={On the Hardy number of comb domains},
	author={C. Karafyllia},
	journal={Ann. Fenn. Math.},
	volume={47},
	date={2022},
	pages={587-601}
}
	
\bib{Kim}{article}{
	title={Hardy spaces and unbounded quasidisks},
	author={Y.C. Kim and T. Sugawa},
	journal={Ann. Acad. Sci. Fenn. Math.},
	volume={36},
	date={2011},
	pages={291--300}
}

\bib{Mars}{book}{
	title={Complex Analysis},
	author={D. Marshall},
	date={2019},
	publisher={Cambridge Mathematical Textbooks, Cambridge University Press},
	address={}
}

\bib{Nev}{book}{
	title={Analytic Functions},
	author={R. Nevanlinna},
	date={1970},
	publisher={Springer-Verlag},
	address={Berlin-Heidelberg}
}
\bib{Ran}{book}{
	title={Potential Theory in the Complex Plane},
	author={T. Ransford},
	date={1995},
	publisher={Cambridge University Press},
	address={Cambridge}
}
\bib{Smith}{article}{
	title={Composition operators between Bergman and Hardy spaces},
	author={W. Smith},
	journal={Trans. Amer. Math. Soc.}
	volume={348},
	date={1996},
	pages={2331-2348}
}

\bib{Yam}{article}{
title={Criteria for functions to be of Hardy class $H^p$},
author={S. Yamashita},
journal={Proc. Amer. Math. Soc.},
volume={75},
date={1979},
pages={69--72}
}

\bib{Zhu}{article}{
	title={Translating inequalities between Hardy and Bergman spaces},
	author={K. Zhu},
	journal={Amer. Math. Monthly}
	volume={111},
	date={2004},
	pages={520-525}
}

\end{biblist}
\end{bibdiv}

\end{document}